\newcommand{\wh}{\widehat}
\newcommand{\bra}{{\langle}}
\newcommand{\ket}{{\rangle}}
\newcommand{\on}{\operatorname}
\newcommand{\+}{\mathop{\oplus}}
\renewcommand{\*}{{\otimes}}
\newcommand{\mc}{\mathcal}
\newcommand{\mf}{\mathfrak}
\newcommand{\g}{\mf{g}}
\newcommand{\h}{\mf{h}}
\newcommand{\affg}{\widehat{\mf{g}}}
\newcommand{\Z}{\mathbb{Z}}
\newcommand{\C}{\mathbb{C}}
\newcommand{\ra}{\rightarrow}
\def\leq{\leqslant}
\def\geq{\geqslant}
\DeclareMathOperator{\Spec}{Spec}
\DeclareMathOperator{\gr}{gr}
\newcommand{\nc}{\newcommand}
\nc{\la}{\lambda}
\nc{\wt}{\widetilde}
\nc{\sw}{{\mathfrak s}{\mathfrak l}}
\nc{\ghat}{\wh{\g}}
\nc{\hhat}{\wh{\h}}
\nc{\bi}{\bibitem}
\nc{\pa}{\partial}
\nc{\ppart}{(\!(t)\!)}
\nc{\pparl}{(\!(\la)\!)}
\nc{\zpart}{(\!(z)\!)}
\nc{\n}{{\mathfrak n}}
\nc{\ol}{\overline}
\nc{\bb}{{\mathfrak b}}
\nc{\su}{\wh\sw_2}
\nc{\can}{\on{can}}
\nc{\ntil}{\wt{\n}}
\nc{\pone}{{\mathbb P}^1}
\nc{\bs}{\backslash}
\nc{\al}{\alpha}
\nc{\gt}{{\mathfrak g}'}
\nc{\ds}{\displaystyle}
\nc{\Bun}{\on{Bun}}
\nc{\Gr}{\on{Gr}}
\def\neg{\negthinspace}
\nc{\ka}{\kappa}
\nc{\cka}{\check\ka}
\nc{\cmu}{{\check\mu}}
\nc{\lka}{{}^L\neg\ka}
\nc{\LP}{{}^L\neg P}
\nc{\LQ}{{}^L\neg Q}
\nc{\lkt}{{}^L\neg\wt\ka}
\nc{\OO}{\mathcal O}
\nc{\Loc}{\on{Loc}}
\nc{\sto}{\!\!\shortto\!\!}
\nc{\hsl}{\widehat{\mathfrak sl}}
\theoremstyle{theorem}
\newtheorem{Th}[subsection]{Theorem}
\newtheorem*{Ques}{Question}
\newtheorem{Pro}[subsection]{Proposition}
\newtheorem{lemma}[subsection]{Lemma}
\newtheorem{Co}[subsection]{Corollary}
\theoremstyle{remark}
\newtheorem{Rem}[subsection]{Remark}
\newtheorem{Ex}[subsection]{Example}
\numberwithin{equation}{section}
\title{A question of Joseph Ritt from the point of view of vertex algebras}
\author{Tomoyuki Arakawa}
\address{Research Institute for Mathematical Sciences, Kyoto University,
Kyoto 606-8502 JAPAN}
\email{arakawa@kurims.kyoto-u.ac.jp}
\author{Kazuya Kawasetsu}
\address{
Priority Organization for Innovation and Excellence,
Kumamoto University,
Kumamoto 860-8555 JAPAN.
}
\email{kawasetsu@kumamoto-u.ac.jp}
\author{Julien Sebag}
\address{Institut de recherche math\'ematique de Rennes\\ UMR 6625 du CNRS\\ Universit\'e de Rennes 1\\
Campus de Beaulieu\\
35042 Rennes cedex (France)}
\email{julien.sebag@univ-rennes1.fr}
\begin{document}

\begin{abstract}
Let $k$ be a field of characteristic zero.
This paper studies a problem proposed by Joseph F. Ritt
in 1950. Precisely, we prove that
\begin{enumerate}
    \item If $p\geq 2$ is an integer, for every integer $i\in\mathbb{N}$, the nilpotency index of the image of $T_i$ in the ring $k\{T\}/[T^p]$ equals $(i+1)p-i$.
    \item For every pair of integers $(i,j)$, the nilpotency index of the image of $T_iU_j$ in the ring $k\{T\}/[TU]$ equals $i+j+1$.
\end{enumerate}

 \end{abstract}

\maketitle
\section{main result}

\subsection{} Let $k$ be a field of characteristic zero. Let us denote by $k\{T\}$ (resp. $k\{T,U\}$) the differential $k$-algebra obtained by endowing the $k$-algebra $k[T_i; i\in \Z_{\geq 0}]$ (resp. $k[T_i,U_j; i,j\in \Z_{\geq 0}]$) with the differential $\partial$ defined by  $\partial T_i=T_{i+1}$ (resp. $\partial T_i=T_{i+1}$ and $\partial U_j=U_{j+1}$) for every integer $i\geq 0$ (resp. for every pair $(i,j)\in \Z_{\geq 0}
^2$ of integers).  In \cite[Appendix/5]{Ritt}, J. F. Ritt asked the following question:

\begin{Ques}
\begin{enumerate}
\item  Let $p\geq 1$ be an integer. Let $[T_0^p]$ be the differential ideal generated by $T_0^p$.
For $p>0$, $i>0$ what is the least $q(i)$ such that $T_i^{q(i)}\equiv 0 \mod{[T_0^p]}$?
\item In $k\{T,U\}$,
what is the least power $q(i,j)$ of $T_i U_j$  such that $(T_iU_j)^{q(i,j)}\equiv 0 \mod{[TU]}$?.
\end{enumerate}

\end{Ques}

\noindent For $i=1$, Ritt states in \emph{loc. cit.}, with no proof, that $q(i)=2p-1$. In \cite{OKeefe}, K. B. O'Keefe gave a proof of this formula and has shown that, for $i= 2$ and $p\geq 2$, one has  $q(i)=3p-2$. See also \cite{Mead2,Mead1,OK1} for complements or connected problems. All these works use differential algebra, and are broadly based on the reduction process  due to H. Levi (see \cite{Levi}). Despite all these results, to the best of our knowledge, Ritt's question is remained open, in general, up to 2014. In \cite{Pogudin}, G. A. Pogudin indeed provides an answer by showing that 
\begin{equation}
\label{eqn:main}
q(i)=(i+1)p-i
\end{equation}
for every integer $i\in\mathbb{Z}_{\geq 0}$. The guideline of his proof consists in injecting the differential algebra $k\{T\}/[T_0^p]$ into a Grassmann algebra, endowed with  a structure of differential algebra, and testing the vanishing of the image of $T_i^q$ in that algebra (see \cite[Lemma 2,Theorem 3]{Pogudin}). In the direction of the second question, one also deduce from the Levi reduction process the following formula (see \cite[III]{Levi}, or, e.g., see \cite{Bourqui-Sebag})
\begin{equation}
\label{eqn:second}
q(i,j)=i+j+1.
\end{equation}

\subsection{} In this article, we provide a proof of formula \eqref{eqn:main} and formula \eqref{eqn:second} using vertex algebra. In this way, our proofs deeply differ from \cite{Pogudin} and \cite{Levi,Bourqui-Sebag}, since they do not use differential algebra in the sense of Ritt. For the first formula, the key point is, up to passing over $\C$, to identify $\C\{T\}/[T_0^p]$ with
the {\em Feigin-Stoyanovsky principal subspace} 
\cite{StoFei94}
of the level $(p-1)$ vacuum representation of the affine Kac-Moody algebra $\widehat{\mf{sl}}_2$,
which is a commutative vertex algebra.
This operation allows us to  reduce formula \eqref{eqn:main} to a simple fact from representation theory. In \cite{MilPen12}, 
the Feigin-Stoyanovsky principal subspace
is 
generalized to the notion of {\em principal subalgebras} of lattice vertex algebras.
They are isomorphic to the {\em free vertex algebras} in the sense of \cite{Bor86,Roi02} (see \cite{Kaw15}).
The answer to the second question can be similarly given 
using the theory of free vertex algebras and lattice vertex operators.

\section{Vertex algebras}

\subsection{} Let $V$ be a vector space over $k$.
A {\em field} on $V$ is a formal power series $f(z)\in \mathrm{End}(V)[[z,z^{-1}]]$
such that $f(z)v\in V((z))$ for any $v\in V$.
Here, $V((z))$ is the space of formal Laurent series whose coefficients are elements of $V$.

\subsection{} A {\em vertex algebra} is a vector space $V$ equipped with
\begin{enumerate}
\item (Vacuum vector) $|0\rangle \in V$,
\item (State-field correspondence) $Y(\cdot,z):V\ra \mathrm{End}(V)[[z,z^{-1}]]$,
\item (Translation operator) $T\in \mathrm{End}(V)$,
\end{enumerate}
such that
\begin{enumerate}
\item $T|0\rangle=0$,
\item $Y(|0\rangle,z)=\mathrm{id}_V$, 
\item $Y(v,z)|0\rangle=v+O(z)$ ($v\in V$),
\item $[T, Y(v,z)]=\partial_z Y(v,z)$ ($v\in V$),
\item (locality) for any $u,v\in V$, there exists $N\in\Z$ such that
\begin{equation}\label{eqn:locality}
(z-w)^{-N}(Y(u,z)Y(v,w)-Y(v,w)Y(u,z))=0.
\end{equation}
\end{enumerate}
The biggest number $N\in \Z$ which satisfies \eqref{eqn:locality} is called the {\em
locality bound} for the pair $u,v$.
The locality bound $N$ for $u,v$ is the same as the number satisfying
\[
u(N-1)v\neq 0, \quad u(n)v=0 \quad n\geq N.
\]
Here 
we have employed the notation $Y(u,z)=\sum_{n\in\Z}u(n)z^{-n-1}$.

We note that
the multiplication
 $v\mapsto u(n)v$ is not
 associative in general.
The monomial $u_1(n_1)(u_2(n_2)(\cdots(u_m(n_m)v)\cdots))\in V$ with $u_1,\ldots,u_m,v\in V$ is simply written as
$u_1(n_1)u_2(n_2)\cdots u_m(n_m)v$.

\section{Proof of Formula \eqref{eqn:main}}

\subsection{}\label{sec:Lefschetz} Let $k'$ be a field extension of $k$. Since, for every $i\geq 0$, the polynomials $\partial^i(T_0^p)$ belong to $k\{T\}$, we observe that, for every integer $q\in\Z_{\geq 0}$, the relation $T_i^q \in [T_0^p]$ holds in $k'\{T\}$ if and only if it holds in $k\{T\}$.
Thus, we may replace $k$ with an algebraic closure of $k$, and, then, by the Lefschetz principle, assume that $k=\C$.

\subsection{} Let  $\g=\mf{sl}_2$ with the standard basis $\{e,h,f\}$,
and 
let
$\affg=\g[t,t^{-1}]\+ \C K$ be
 the 
affine Kac-Moody algebra associated with $\g=\mf{sl}_2$.
The commutation relations of $\affg$ are given by
$[x_m,y_n]=[x,y]_{m+n}+ n\delta_{m+n,0}(x|y)K$,
$[K,\affg]=0$,
where $x_m=xt^m$ and $(x|y)=\on{tr}(xy)$
for $x,y\in \g$, $m\in \Z$. Let $L_{p-1}(\g)$ be the
irreducible vacuum representation of the affine Kac-Moody algebra 
of level $p-1$,
which is 
the unique simple quotient of the
induced module
$U(\affg)\*_{U(\g[t]\+ \C K)}\C_{p-1}$,
where $\C_{p-1}$ is the one-dimensional representation of 
$\g[t]\+ \C K$ on which $\g[t]$ acts trivially and 
$K$ acts as multiplication by $p-1$.
As is well-known (\cite{FreZhu92}),
there is 
a unique vertex algebra structure on $L_{p-1}(\g)$
such that the highest weight vector $|0\ket$ is the 
vacuum vector and 
\begin{align*}
Y(x_{-1}|0\ket,z)=x(z):=\sum_{n\in \Z}x_nz^{-n-1},\quad  x\in \g.
\end{align*}


\subsection{} The \emph{Feigin-Stoyanovsky principal subspace} $W$
of $L_{p-1}(\g)$
is by definition 
 the commutative vertex subalgebra 
of  $L_{p-1}(\g)$
generated by $e(z)$. Let $\partial $ be the differential 
 of  $\C[e_{-1},e_{-2},e_{-3},\dots,]$  defined by $\partial e_{-i}=ie_{-i-1}$
 for every $i\geq 1$.
We have a surjective morphism
\begin{align}
\C[e_{-1},e_{-2},e_{-3},\dots,]\twoheadrightarrow W,\quad f\mapsto f|0\ket
\label{eq:W}
\end{align}
of differential algebras.
According to  \cite{StoFei94} (see also \cite{CalLepMil08,CalLepMil08b,Fei11,LiH}),
the kernel $J$ of the above map
is the ideal generated by the $\partial^n( e_{-1}^p)$ for all $n\in\Z_{\geq 0}$.
Therefore, we have an isomorphism  of differential algebras given by:
\begin{align}
W=\C[e_{-1},e_{-2},e_{-3},\dots,]/J \cong \C\{T\}/[T_0^p],
\quad  e_{-i}\mapsto  T_{i-1}/(i-1)!.
\label{eq:iso}
\end{align}

\begin{Rem}
Let us stress that 
the character of $W$ coincides with that of the Virasoro $(2,2p+1)$-minimal model
vertex algebra
$\on{Vir}_{2,2p+1}$
(\cite{StoFei94,Fei11}). Let us stress that, if $X=\Spec(\C[T]/\bra T^p\ket)$, the  $k$-algebra $\C\{T\}/[T_0^p]$ is isomorphic to the algebra  $\mc{O}(J_{\infty}X)$ of the arc scheme  $J_{\infty}X$  associated with $X$.
Let us then mention that the identification of
$\mc{O}(J_{\infty}X)$
with $\gr \on{Vir}_{2,2p+1}$ has been previously established in 
\cite{EkeHel}. 
\end{Rem}

\subsection{} Formula \eqref{eqn:main} immediately  follows from \eqref{eq:iso}
and Proposition \ref{Pro:observation}.

\begin{Pro}\label{Pro:observation}
We have $e_{-i}^{i(p-1)}|0\ket \ne 0$ 
and $e_{-i}^{i(p-1)+1}=0$ on $W$
for all  $i\geq 1$.
\end{Pro}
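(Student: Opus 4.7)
The plan is to embed $L_{p-1}(\g)$ into a lattice vertex algebra and reduce both claims to a concrete coefficient extraction. First I would lift the diagonal Lie algebra embedding $\g\hookrightarrow\g^{\oplus(p-1)}$ to a vertex algebra embedding $L_{p-1}(\g)\hookrightarrow L_{1}(\g)^{\otimes(p-1)}$; the vacuum $|0\ket^{\otimes(p-1)}$ has level $p-1$, is killed by the positive diagonal modes of $\affg$, and generates an integrable submodule which must be $L_{p-1}(\g)$ by simplicity. I then identify $L_{1}(\g)^{\otimes(p-1)}$ with the lattice vertex algebra $V_{L}$ for $L=\bigoplus_{j=1}^{p-1}\Z\alpha_{j}$ with $(\alpha_{j},\alpha_{k})=2\delta_{jk}$, so that the current becomes $e(z)=\sum_{j=1}^{p-1}e^{\alpha_{j}}(z)$.

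Next I would observe that $e_{-i}^{N}|0\ket$ is the coefficient of $z_{1}^{i-1}\cdots z_{N}^{i-1}$ in $e(z_{1})\cdots e(z_{N})|0\ket\in V_{L}[[z_{1},\ldots,z_{N}]]$, and expand this product using $e^{\alpha_{j}}(z)e^{\alpha_{k}}(w)=(z-w)^{2\delta_{jk}}{:}e^{\alpha_{j}}(z)e^{\alpha_{k}}(w){:}$. Grouping contributions by the colouring $\mathbf{j}\colon\{1,\ldots,N\}\to\{1,\ldots,p-1\}$, with $S_{j}=\mathbf{j}^{-1}(j)$ and $n_{j}=|S_{j}|$, each term factors as a Vandermonde-squared prefactor $\prod_{j}\prod_{m<m'\in S_{j}}(z_{m}-z_{m'})^{2}$, times a lattice translate $e^{\sum_{j}n_{j}\alpha_{j}}$, times a Heisenberg exponential in the $z$'s acting on $|0\ket$.

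For the vanishing of $e_{-i}^{i(p-1)+1}|0\ket$ I intend to use a simple degree bound: the Vandermonde prefactor carries $z$-degree $\sum_{j} n_{j}(n_{j}-1)$ and the Heisenberg exponential has non-negative $z$-degree, so a colouring can contribute to the target degree $N(i-1)$ only when $\sum_{j}n_{j}^{2}\leq Ni$. The integer minimum of $\sum_{j} n_{j}^{2}$ over compositions of $N$ into $p-1$ non-negative parts is attained when the $n_{j}$ are as equal as possible; for $N=i(p-1)+1$ this minimum equals $i^{2}(p-1)+2i+1$, which is strictly larger than $Ni=i^{2}(p-1)+i$, so every colouring contributes zero.

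For $N=i(p-1)$, equality in the bound forces $n_{j}=i$ for every $j$, and the Heisenberg exponential must reduce to the vacuum. I would then compute the coefficient of $\prod_{m\in S_{j}}z_{m}^{i-1}$ in the $i$-variable Vandermonde squared; it equals $(-1)^{\lfloor i/2\rfloor}i!$, as one sees by pairing permutations $\sigma,\tau$ in the expansion of $\det(z_{m}^{k-1})^{2}$ via $\sigma(m)+\tau(m)=i+1$. Summing over the $N!/(i!)^{p-1}$ valid colourings yields a nonzero scalar multiple of $e^{i(\alpha_{1}+\cdots+\alpha_{p-1})}|0\ket\in V_{L}$, proving $e_{-i}^{i(p-1)}|0\ket\neq 0$. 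The main obstacle will be bookkeeping the lattice normalisations cleanly enough that the degree bound on $\sum_{j} n_{j}^{2}$ and the final Vandermonde coefficient identity apply without sign or cocycle ambiguity; once that setup is in place, both halves of the statement reduce to this single combinatorial computation.
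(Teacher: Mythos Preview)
Your proposal is correct but takes a genuinely different route from the paper. The paper observes that $\{f_i,\,-h_0+iK,\,e_{-i}\}$ is an $\mf{sl}_2$-triple inside $\affg$; since $L_{p-1}(\g)$ is integrable and $(-h_0+iK)|0\ket=i(p-1)|0\ket$, the vacuum generates an $(i(p-1)+1)$-dimensional irreducible module over this triple, whence $e_{-i}^{i(p-1)}|0\ket\neq 0$ and $e_{-i}^{i(p-1)+1}|0\ket=0$ immediately. That is the whole argument: five lines of finite-dimensional representation theory, no lattice input.

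Your approach instead realises $L_{p-1}(\g)$ inside $V_{A_1^{p-1}}$ via the diagonal embedding into $L_1(\g)^{\otimes(p-1)}$, and reduces both claims to the inequality $\min\sum_j n_j^2>Ni$ for $N=i(p-1)+1$ together with an explicit Vandermonde-squared coefficient for $N=i(p-1)$. This is valid (the orthogonal lattice allows a trivial cocycle, so your worry about sign ambiguity is unfounded), and it even yields the explicit identity $e_{-i}^{i(p-1)}|0\ket=(\pm)\,(i(p-1))!\,e^{i(\alpha_1+\cdots+\alpha_{p-1})}$. In fact your method is exactly the lattice/conformal-weight technique the paper deploys for the \emph{second} Ritt question (Theorem~\ref{sec:rittab}); you have transported it to the first question, at the cost of considerably more bookkeeping than the $\mf{sl}_2$-triple shortcut. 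The paper's proof is shorter and more conceptual; yours is more computational but gives finer explicit information about the nonzero vector.
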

\begin{proof}
We have $[f_i,e_{-i}]=-h_0 + i K$,
and
so,
$\{f_{i},-h_0 + i K, e_{-i} \}$
forms an $\mf{sl}_2$-triple
 inside
$\affg$.
Since $L_1(\g)$ is integrable and 
$(-h_0 + i K)|0\ket =i(p-1) |0\ket$,
$|0\ket $ generates an $(i(p-1)+1)$-dimensional representation over the $\mf{sl}_2$-triple.
Therefore,
$e_{-i}^{i(p-1)}|0\ket \ne 0$ and  $e_{-i}^{i(p-1)+1}|0\ket = 0$.
\end{proof}


\section{Proof of Formula \eqref{eqn:second}}

In order to give 
a proof of second question,
we need to introduce lattice vertex algebras
and free vertex algebras.

\subsection{} As in subsection \ref{sec:Lefschetz}, we may assume that $k=\mathbb{C}$.

\subsection{} Let $L$ be an integral lattice with the $\Z$-bilinear form $(\cdot,\cdot):L\times L\rightarrow\Z$ on $L$.
We set $\h=k\otimes_\Z L$ with the extended $k$-bilinear form 
$(\cdot,\cdot):\h\times\h\rightarrow k$
and let $k[L]=\bigoplus_{\alpha\in L}k e^\alpha$ be the group algebra of $L$.
Then the vector space 
\[
V_L=M(1)\otimes k[L]
\] 
admits a natural vertex superalgebra structure, called the
{\em lattice vertex superalgebra} associated with $L$.
Here, $M(1)$ is the Heisenberg vertex algebra (Fock space)
attached to $\h$.
We have $|0\ket =1\*1$,
where we write $1$ for the vacuum vector of $M(1)$.
The state-field correspondence for $1\otimes e^\alpha$ is
\[
Y(1\otimes e^\alpha,z)=E^-(-\alpha,z)E^+(-\alpha,z)\* e_\alpha z^\alpha,
\]
where
\[
E^\pm(-\alpha,z)=\mathrm{exp}\Bigl(\sum_{n\in\pm \Z}
\frac{-\alpha(n)}{n}z^{-n}\Bigr),
\]
$z^\alpha$ is defined to be $z^{(\alpha,\beta)}$ on $M(1)\otimes e^\beta$,
and $e_\alpha :k[L]\ra k[L]$ is defined by $e_\alpha (e^\beta)=\eta(\alpha,\beta)e^{\alpha+\beta}$ with a certain cocycle $\eta$ on $L$.
The superalgebra $V_L$ is a vertex algebra if and only if 
$L$ is an even lattice.
Here $L$ is called \emph{even} if $(\alpha,\alpha)$ is even for any $\alpha\in L$.

Let $\g$ be a finite-dimensional simple Lie algebra of type ADE with 
the root lattice $Q$.
It is known that the affine vertex algebra $L_1(\g)$ of level 1 is isomorphic
to the lattice vertex algebra $V_Q$.

\subsection{}

Let $C$ be a $\Z$-basis of $L$.
The vertex subsuperalgebra $W=W(C,L)$ of $V_L$ generated by $\{e^\alpha\,;\,\alpha\in C\}$
is called a {\em principal subalgebra} of $V_L$
\cite{MilPen12}.

For instance, the Feigin-Stoyanovsky 
principal subspaces
of $L_1(\g)$ are isomorphic to the principal subalgebras $W(\Phi,Q)$ of lattice vertex algebras $V_Q$, where $\Phi$ is a base of the root system of $\g$.

\subsection{}
Let $B$ be a set and $N:B\times B\ra \Z$ a symmetric function.
The {\em free vertex superalgebra} $F=F(B,N)$ is freely generated by $B$
such that for any $a,b\in B$, the number $N(a,b)$ is the locality bound for the pair $a,b$.
It has the following universal property: any vertex superalgebra generated by $B$
satisfying
\begin{equation}\label{eqn:localfree}
(z-w)^{-N(a,b)}(Y(a,z)Y(b,w)-Y(b,w)Y(a,z))=0 \quad 
\mbox{with }a,b\in B,
\end{equation}
 is a sujective image of $F$
(see \cite{Roi02,Kaw15}).
The free vertex algebras were first mentioned in \cite{Bor86}
and constructed in \cite{Roi02}.
The construction 
in \cite{Roi02} basically proceeds as follows:
\begin{itemize}
\item Consider the free associative algebra $A$ generated by the symbols
$a(n)$ with $a\in B$ and $n\in\Z$.
\item Take an appropriate completion $\widehat{A}$ of $A$ so that 
we can
take the quotient of $\widehat{A}$ by the two-sided ideal generated by
the left-hand sides of \eqref{eqn:localfree}, which are infinite sums in general.
\item Again quotient it by the left ideal generated by $a(n)$
for $a\in B$ and $n\geq 0$, which corresponds to axiom (3) in the definition of vertex algebras. This is the free vertex algebra $F=F(B,N)$.
\item The set $B$ is embedded in $F$ by $a\mapsto a(-1)$.
\end{itemize}
See also \cite{Kaw15} for an account of Roitman's construction.
There, the completion is taken as a generalization of the {\em degreewise completion}
in the sense of \cite{MatNag10}, which is used to define the universal enveloping algebras of vertex algebras.

Let $L=L(B,N)$ be the free abelian group generated by $B$ with the $\Z$-bilinear form
$(\cdot,\cdot):L\times L\ra \Z$ defined by bilinearly extending the assignment
$(a,b)=-N(a,b)$ for $a,b\in B$.
Then the free vertex (super)algebra $F(B,N)$ is isomorphic to
the principal subalgebra $W(B,L)$ (see \cite{Roi02,Kaw15}).

\subsection{}
\label{sec:combinatorialbases}

We recall combinatorial bases of free vertex (super)algebras
from \cite{Roi02,MilPen12,Kaw15}.
Let $B$ be a set and $N:B\times B\ra \Z$ a symmetric function.
Suppose that $B$ is totally ordered with the order $<$.
The free vertex superalgebra $F=F(B,N)$ has the $\C$-basis
which consists of the monomials of the form
\[
a_m(n_m+\sum_{i=1}^{m-1}N(a_m,a_i))\cdots a_2(n_2+N(a_2,a_1))a_1(n_1)|0\rangle
\]
with $m\geq 0$, $a_1\leq a_2\leq \ldots\leq a_m\in B$ and
$n_1,n_2,\ldots,n_m\in \Z_{<0}$
such that $n_i\leq n_{i-1}$ if $a_i=a_{i-1}$ for $1<i\leq m$.

\subsection{}

Let us take over the notation of the previous subsection
and suppose from now on that $N(a,a)\in2\Z$ for every $a\in B$.
In this case, the superalgebra $F=F(B,N)$ is a vertex algebra.

Recall that a vertex algebra $V$ is called {\em commutative}
if $a(n)b=0$ for any $a,b\in V$ and $n\geq0$.
In this case, $V$ is a differential algebra
with the multiplication $a\cdot b=a(-1)b$ and differential
$\partial =T$, the translation operator of $V$.

By the construction of free vertex algebras,
we see that $F$ is commutative if and only if $N(a,b)\leq 0$.

From now on let us assume that $F$ is commutative.

Let $R_F$ be Zhu's Poisson algebra associated with $F$:
\[
R_F:=F/C_2(F),\quad C_2(F):=\mbox{span}_\C\{u(-2)v\,;\, u,v\in F\}.
\]
Let $u$ and $v$ be elements of $F$.
Note that $u(n)v\in C_2(F)$ for any $n\leq -2$.
We write by $\bar{u}\in R_F$ the image of $u$ under the canonical surjection
$F\ra R_F$.
The product on $R_F$ is defined by $\bar{u}\bar{v}=\overline{u(-1)v}$
and the Poisson bracket is $\{\bar{u},\bar{v}\}=\overline{u(0)v}$.
As we assume that $F$ is commutative, the Poisson bracket is trivial
and $R_F$ is a commutative associative algebra.

By using the combinatorial basis given in Subsection~\ref{sec:combinatorialbases},
we observe that 
\begin{equation}\label{eqn:poisson}
R_F\cong\C[B]/(ab\,;\,a,b\in B, N(a,b)\leq -1)
\end{equation}
as algebras, where $\C[B]$ is the polynomial algebra with the set $B$ of independent
variables.

\subsection{}
In this section, we describe free vertex algebras as lifts of quotients of differential algebras.
After finishing this work, we found that recent preprint \cite{LiH} proves 
Theorem \ref{sec:jetfree1} and Corollary \ref{Co:free} including super-cases
using theory of principal subspaces of lattice vertex superalgebras.
We however would like to keep the present proofs which use the theory of free vertex algebras
as it seems to be remarkably short.

Let $B$ be a set and $N:B\times B\ra \Z$ a symmetric function.
Suppose that $(a,a)\in2\Z$ for every $a\in B$.
We assume that $N(a,b)\leq 0$ for all $a,b\in B$ so that 
$F=F(B,N)$ is commutative.
Recall that $k\{B\}=k[a_i\,;\,a\in B,i\in \Z_{\geq0}]$ denotes the differential $k$-algebra with the differential $\partial$.

\begin{Th}\label{sec:jetfree1}
Let $I$ be the differential ideal generated by $\{a_{-m-1}b_0\,;\,a,b\in B,N(a,b)\leq m\leq -1\}$. Then, 
we have 
\begin{equation}\label{eqn:jetfree1}
F(B,N)\cong k\{B\}/I.
\end{equation}
\end{Th}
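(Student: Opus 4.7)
The plan is to produce mutually inverse maps between $F = F(B,N)$ and $k\{B\}/I$, exploiting the universal properties of both objects together with the dictionary between vertex algebra modes and differential algebra operations available in a commutative vertex algebra.

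Since $F$ is commutative, it carries the structure of a differential $k$-algebra with product $u\cdot v = u(-1)v$ and derivation $T$. The universal property of $k\{B\}$ as the free differential algebra on $B$ yields a morphism of differential algebras $\phi\colon k\{B\} \to F$ determined by $a_i \mapsto T^i a$. It is surjective: since $F$ is commutative, generating $F$ as a vertex algebra by $B$ is equivalent to generating it as a differential algebra, and the latter holds by assumption. To see that $\phi$ factors through $k\{B\}/I$, I would invoke the standard identity $(T^k u)(-1) = k!\, u(-k-1)$, valid in any vertex algebra, to compute
\[
\phi(a_{-m-1}b_0) \;=\; T^{-m-1} a \cdot b \;=\; (-m-1)!\, a(m)b.
\]
For $N(a,b)\leq m \leq -1$ the right-hand side vanishes by the very definition of the locality bound, so $\phi$ kills each generator of $I$, hence the whole differential ideal $I$. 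Let $\bar\phi\colon k\{B\}/I \twoheadrightarrow F$ denote the induced surjection.

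For the reverse direction, I would equip $k\{B\}/I$ with its natural commutative vertex algebra structure, namely $Y(u,z)v = (e^{zT}u)\cdot v$. Reading off coefficients in this formula yields, for $a,b \in B$, the identity $a(-k-1)b = \tfrac{1}{k!}\, a_k b_0$ inside $k\{B\}/I$. By construction of $I$, this vanishes whenever $0\leq k < -N(a,b)$; combined with the automatic vanishing $a(n)b = 0$ for $n\geq 0$ in any commutative vertex algebra, we conclude that the locality bound of the pair $(a,b)$ inside $k\{B\}/I$ is at most $N(a,b)$. The universal property of the free vertex algebra then produces a vertex algebra morphism $\psi\colon F \to k\{B\}/I$ with $\psi(a) = a_0$ for every $a\in B$.

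The composition $\bar\phi\circ\psi$ is a vertex algebra endomorphism of $F$ that fixes the generating set $B$, hence it coincides with the identity of $F$. This forces $\bar\phi$ to be injective; combined with its surjectivity, we obtain the isomorphism \eqref{eqn:jetfree1}. The only delicate ingredient is the translation between the mode operation $u(n)\cdot$ and the differential-algebra operations $\cdot$ and $T$ in the commutative setting, encoded in the identity $a(-k-1)b = (T^k a/k!)\cdot b$. Once this is in hand, both the verification $\phi(I)=0$ and the locality check inside $k\{B\}/I$ become essentially tautological, and the main obstacle reduces to correctly setting up the pair of universal properties that close the loop.
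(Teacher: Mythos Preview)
Your argument is correct and is genuinely different from the paper's. The paper obtains the surjection $\psi\colon F\to k\{B\}/I$ from the universal property of $F$ exactly as you do, but for the other direction it does \emph{not} use the universal property of $k\{B\}$. Instead it invokes the lattice realisation: there is an evident surjection $k\{B\}/I\twoheadrightarrow W(B,L)$ onto the principal subalgebra of $V_{L(B,N)}$, and since $F\cong W(B,L)$ by the cited results of Roitman and Kawasetsu, one has a chain of surjections $F\twoheadrightarrow k\{B\}/I\twoheadrightarrow F$ whose composite fixes $B$ and is therefore the identity. Your route avoids the lattice model entirely, trading the external isomorphism $F\cong W(B,L)$ for the elementary dictionary $(T^k u)(-1)=k!\,u(-k-1)$ in a commutative vertex algebra; this makes the proof self-contained once $F$ is known to be commutative. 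The paper's argument is shorter only because that isomorphism is taken as given.

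One small logical slip: from $\bar\phi\circ\psi=\mathrm{id}_F$ alone you deduce only that $\psi$ is injective and $\bar\phi$ is surjective, not that $\bar\phi$ is injective. You need in addition that $\psi$ is surjective, which does hold (the image of $\psi$ is a vertex subalgebra containing the $a_0$, and these generate $k\{B\}/I$; indeed the paper's universal property already asserts surjectivity). Alternatively, observe that $\psi\circ\bar\phi$ is a differential-algebra endomorphism of $k\{B\}/I$ fixing each $a_0$, hence the identity, which together with $\bar\phi\circ\psi=\mathrm{id}_F$ closes the loop without further comment. Also note that your verification that $a_0(n)b_0=0$ for $N(a,b)\le n\le -1$ inside $k\{B\}/I$ is not actually needed to apply the universal property of $F$: since $k\{B\}/I$ is commutative and $-N(a,b)\ge 0$, the locality relation $(z-w)^{-N(a,b)}[Y(a_0,z),Y(b_0,w)]=0$ is automatic.
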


\begin{proof}
By the universality of $F$, we have the surjection $F\ra k\{B\}/I$
since $k\{B\}/I$ is commutative.
On the other hand, we have the surjection
$\pi:k\{B\}/I\ra W(B,L)$ from $k\{B\}/I$ to the principal subalgebra of the lattice vertex algebra $V_L$ with the lattice $L=L(B,N)$.
Since $F\cong W(B,L)$, we have the assertions.
\end{proof}

\begin{Rem}
When the lattice $L(B,N)$ is positive definite,
Theorem~\ref{sec:jetfree1} is proved in \cite[Theorem 2]{Pen14}.
\end{Rem}

\begin{Co}\label{Co:free}
The free vertex algebra $F=F(B,N)$ is isomorphic to the 
jet lift of Zhu's Poisson algebra $R_F$
\begin{equation}\label{eqn:jetfree2}
F(B,N)\cong k\{B\}/(a_0b_0\,;\,a,b\in B,N(a,b)\leq -1)
\end{equation}
if and only if $N(a,a)\in\{0,-2\}$ for every $a\in B$ and $N(a,b)\in\{0,-1\}$ for every pair $(a,b)$ with $a\neq b\in B$.
\end{Co}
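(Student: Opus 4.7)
The plan is to reduce the corollary to the ideal equality $I = J$ in $k\{B\}$, where $I$ is the differential ideal of Theorem~\ref{sec:jetfree1} and
\[
J := [a_0 b_0 \,;\, a,b \in B,\ N(a,b) \leq -1]
\]
is the differential ideal cutting out the right-hand side of \eqref{eqn:jetfree2}, which by \eqref{eqn:poisson} is exactly the jet lift of $R_F$. Since $J \subseteq I$ always (taking $m = -1$ in the defining generators of $I$), there is a canonical surjection $k\{B\}/J \twoheadrightarrow F$, and the corollary is equivalent to the reverse inclusion $I \subseteq J$.

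For the sufficiency direction, under the hypothesis $N(a,a) \in \{0,-2\}$ and $N(a,b) \in \{0,-1\}$ for $a \neq b$, I would observe that the only generators of $I$ not already generators of $J$ are the elements $a_1 a_0$ arising from diagonal pairs with $N(a,a) = -2$; and $a_1 a_0 = \tfrac{1}{2}\partial(a_0^2) \in J$. Hence $I \subseteq J$.

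For the necessity direction, I would split into two cases according to which condition fails, and in each exhibit an element of $I \setminus J$ using the already-proved formulas \eqref{eqn:main} and \eqref{eqn:second}. If $N(a,a) \leq -4$ for some $a \in B$, then $a_0^2, a_1 a_0, a_2 a_0$ all lie in $I$, so $a_1^2 = \partial(a_1 a_0) - a_2 a_0 \in I$. Projecting $k\{B\} \twoheadrightarrow k\{a\}$ (killing all other variables) carries $J$ into $[a_0^2]$, and formula \eqref{eqn:main} with $p = 2$ gives $q(1) = 3$, so $a_1^2 \neq 0$ in $k\{a\}/[a_0^2]$, whence $a_1^2 \notin J$. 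If instead $N(a,b) \leq -2$ for some $a \neq b$, then $a_1 b_0 \in I$ (from $m = -2$). Projecting $k\{B\} \twoheadrightarrow k\{a,b\}$ sends $J$ into the differential ideal generated by some subset of $\{a_0^2, a_0 b_0, b_0^2\}$. Equipping $k\{a,b\}$ with the bigrading in which $a_i$ has bidegree $(1,0)$ and $b_j$ has bidegree $(0,1)$, the element $a_1 b_0$ sits in bidegree $(1,1)$, where only the derivatives of $a_0 b_0$ contribute. The nonvanishing of $a_1 b_0$ then reduces to $\C\{a,b\}/[a_0 b_0] \cong \C\{T,U\}/[TU]$, and \eqref{eqn:second} with $q(1,0) = 2$ delivers the desired $a_1 b_0 \neq 0$.

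The main obstacle is the last part of the necessity argument: one must prevent the possibly present generators $a_0^2, b_0^2$ in $J$ (coming from large self-pairings) from combining with the derivatives of $a_0 b_0$ to swallow $a_1 b_0$. The bigrading observation is what contains this interaction, isolating $a_1 b_0$ in a bidegree where only the mixed generator $a_0 b_0$ is relevant, so that formulas \eqref{eqn:main}--\eqref{eqn:second} apply directly.
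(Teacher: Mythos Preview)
Your reduction to the ideal equality $I=J$ and the sufficiency argument coincide with the paper's proof. For necessity the paper takes a shorter, self-contained route: it simply notes that the degree-$2$ component of $J$ (after projecting to $k\{a\}$, resp.\ the bidegree-$(1,1)$ component after projecting to $k\{a,b\}$) is the $k$-span of the derivatives $\partial^n(a_0^2)$ (resp.\ $\partial^n(a_0b_0)$); in weight~$2$ this span is $k\cdot(a_2a_0+a_1^2)$, and in weight~$1$ it is $k\cdot(a_1b_0+a_0b_1)$. Hence the generator $a_2a_0\in I$ (present once $N(a,a)\leq -4$), resp.\ $a_1b_0\in I$ (present once $N(a,b)\leq -2$), visibly fails to lie in $J$. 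No appeal to \eqref{eqn:main} or \eqref{eqn:second} is needed.

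Your argument is also correct, but be aware of a logical wrinkle: within this paper, Corollary~\ref{Co:free} is precisely the input (via the Example) that identifies $k\{T,U\}/[TU]$ with the principal subspace $W$, which is then used to establish \eqref{eqn:second} in Theorem~\ref{sec:rittab}. Invoking \eqref{eqn:second} here is therefore circular relative to the paper's internal logic (it is of course fine if you cite Levi's original proof). The easy fix is to replace the appeal to $q(1,0)=2$ by the direct weight-$1$ observation above; your bigrading step already reduces the question to exactly that one-dimensional span. The use of \eqref{eqn:main} in the diagonal case is not circular, since it was proved in Section~3, though it is heavier than necessary for the same reason.
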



\begin{proof}
Write $J$ the divisor of the right-hand side of \eqref{eqn:jetfree2}.
As $\partial(a_0a_0)=2a_1a_0$, we see that $I=J$
if $N(a,a)\in\{0,-2\}$ for any $a\in B$ and $N(a,b)\in\{0,-1\}$ for all $a\neq b\in B$.
Since $\partial^2(a_0a_0)=2a_2a_0+a_1a_1$
and $\partial(a_0b_0)=a_1b_0+a_0b_1$,
we have the second assertion.
\end{proof}

\begin{Ex}
Let $B=\{a,b\}$ with $N(a,a)=N(b,b)=0$ and $N(a,b)=N(b,a)=-1$.
Then $R_F\cong k[a,b]/(ab)$ as algebras and $F\cong k\{a,b\}/(a_0b_0)$
as differential algebras, where $F=F(B,N)$.
\end{Ex} 

\subsection{}


We now apply Corollary \ref{Co:free}
to give an answer to the second problem of Ritt.

Let $L$ be a lattice with the $\Z$-basis $C=\{\alpha,\beta\}$ and the symmetric bilinear form defined by
$$
(\alpha,\alpha)=0,\qquad(\beta,\beta)=0,\qquad (\alpha,\beta)=1.
$$
Let us consider the lattice vertex algebra $V_L$ with $\h=\C\otimes_\Z L$.
Note that we have
$$
e_\gamma.e^\delta(z)=z^{-(\gamma,\delta)}e^\delta(z).e_\gamma\qquad \gamma,\delta\in L.
$$
Note also that 
\begin{equation}\label{eqn:vacuum}
E^+(h,z)1\* e^\gamma=1\* e^\gamma
\end{equation} 
for any $h\in\h$ and $\gamma\in L$.
\begin{lemma} {\rm (cf.~\cite[Proposition 6.3.14]{LL})} \label{sec:lem1}
For any $\gamma,\delta\in L$, 
$$
E^+(\gamma,z)E^-(\delta,w)=(1-w/z)^{(\gamma,\delta)}E^-(\delta,w)E^+(\gamma,z).
$$
\end{lemma}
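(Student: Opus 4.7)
The identity is the classical commutation rule between the creation and annihilation halves of a lattice vertex operator, and the plan is a direct application of the Baker--Campbell--Hausdorff formula.

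I would first write $E^+(\gamma,z)=\exp(A)$ and $E^-(\delta,w)=\exp(B)$, where $A$ involves only the positive modes $\gamma(n)$ ($n>0$) and $B$ involves only the negative modes $\delta(n)$ ($n<0$), with coefficients read off from the definition of $E^{\pm}$ in the preceding subsection. The Heisenberg commutation relations on $M(1)$ show that $[\gamma(m),\delta(n)]$ is a scalar (proportional to $(\gamma,\delta)$) supported on $m+n=0$, so $[A,B]$ is a scalar-valued formal power series in $w/z$. Since it is central, the Baker--Campbell--Hausdorff formula collapses to
\[
e^{A}e^{B}\;=\;e^{[A,B]}\,e^{B}e^{A}.
\]

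Second, I would compute $[A,B]$ by collecting the surviving pairs $(m,-m)$ with $m\geq 1$ and summing the resulting geometric-type series; using the formal identity $\sum_{m\geq 1}x^{m}/m=-\log(1-x)$ one obtains
\[
[A,B]\;=\;(\gamma,\delta)\,\log(1-w/z),
\]
interpreted as a formal power series in $w/z$. Exponentiating gives $e^{[A,B]}=(1-w/z)^{(\gamma,\delta)}$, where the right-hand side is understood as its formal binomial expansion in nonnegative powers of $w/z$. Substituting into the BCH display yields the claimed identity.

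The argument is entirely formal: no convergence issue arises in BCH because $[A,B]$ is central, and each series is well-defined on $V_L$ because the annihilation modes in $A$ act locally nilpotently on any given state. The main obstacle, if any, is bookkeeping --- one must carefully track signs in the definitions of $E^\pm$, in the Heisenberg commutator, and in the expansion of $(1-w/z)^{(\gamma,\delta)}$, so that they conspire to produce the stated exponent $(\gamma,\delta)$ rather than its negative.
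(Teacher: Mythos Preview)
Your argument is correct and is exactly the standard proof of this identity. The paper does not supply its own proof of this lemma; it simply records the statement with a reference to \cite[Proposition 6.3.14]{LL}, where the same Baker--Campbell--Hausdorff computation you outline is carried out. So your approach coincides with the cited source, and there is nothing to compare.
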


The vertex algebra $V_L$ is graded by 
conformal weights:
$V_L=\bigoplus_{\Delta\in \Z}(V_L)_{\Delta}$,
where 
$(V_L)_{\Delta}$  is the subspace spanned by the vectors of conformal weight $\Delta$ and
the conformal weight of $h_1(-n_1)\cdots h_m(-n_m) \otimes e^\gamma$ is
given by
$$
n_1+\cdots n_m+\frac{(\gamma,\gamma)}{2}.
$$
Note that
\begin{align}
e^{\alpha}(n)V_{\Delta}\subset V_{\Delta-n-1}.
\label{eq:weight-of-e{alpha}(n)}
\end{align}

Now, the key point is that 
the free vertex algebra $F(B,N)$ defined in Example 1 is isomorphic to the following
principal subspace $W$ of $V_L$:
$$
W=W_L(C)=\langle 1\* e^\alpha,1\* e^\beta\rangle\subset V_L.
$$
It is defined by the assignment $a\mapsto 1\* e^\alpha$ and $b\mapsto 1\* e^\beta$.

\begin{Th}\label{sec:rittab}
Let $i,j$ be non-negative integers. Then
in $W$ we have
$$
(e^\alpha(-i-1)e^\beta(-j-1))^n |0\rangle\ne 
0\quad \text{if and only if}
\quad n\leq i+j.
$$
\end{Th}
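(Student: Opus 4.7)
My plan is to prove the two directions of the biconditional separately, using a conformal weight argument for the ``zero'' direction and an explicit polynomial computation combined with a descending induction for the ``nonzero'' direction.

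For the vanishing direction ($n \geq i+j+1$), I would use the conformal weight grading on $V_L$. Since $(\alpha,\alpha)/2 = 0$, the vector $1\otimes e^\alpha$ has conformal weight $0$, and by the weight-shift formula for modes the operator $e^\alpha(-i-1)$ shifts conformal weight by $i$; likewise $e^\beta(-j-1)$ shifts it by $j$. Hence $v_n := (e^\alpha(-i-1)e^\beta(-j-1))^n|0\ket$ lies in the conformal weight $n(i+j)$ component of $M(1)\otimes e^{n(\alpha+\beta)}$. The minimal conformal weight of this subspace is $(n(\alpha+\beta), n(\alpha+\beta))/2 = n^2$, so whenever $n > i+j$ we have $n(i+j) < n^2$ and therefore $v_n = 0$.

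For the nonvanishing direction ($n \leq i+j$), my starting observation is the elementary monotonicity $v_n = 0 \Rightarrow v_{n+1} = e^\alpha(-i-1)e^\beta(-j-1) v_n = 0$. By descending induction, it then suffices to prove $v_{i+j} \neq 0$. At $n = i+j$ the conformal weight $n^2$ of $v_n$ saturates the minimal weight of $M(1)\otimes e^{n(\alpha+\beta)}$, whose minimum-weight subspace is the one-dimensional line $\C \cdot (1\otimes e^{n(\alpha+\beta)})$. I would then iterate Lemma~\ref{sec:lem1} (together with the lattice shifts $z^\gamma$) to commute the $E^+$'s past the $E^-$'s in $Y(1\otimes e^\alpha, z_1) Y(1\otimes e^\beta, w_1) \cdots Y(1\otimes e^\alpha, z_n) Y(1\otimes e^\beta, w_n)|0\ket$, which produces a standard factorization of the form
\[
\epsilon \cdot \prod_{k,l=1}^n (z_k - w_l) \cdot \prod_{k=1}^n E^-(-\alpha, z_k) E^-(-\beta, w_k) \cdot (1\otimes e^{n(\alpha+\beta)}),
\]
where $\epsilon$ is a nonzero sign-and-cocycle scalar. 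Since $v_{i+j}$ is the coefficient of $\prod_k z_k^i w_k^j$ in this expression and lies on the one-dimensional minimum-weight line, only the constant term of the $E^-$-factors contributes, giving
\[
v_{i+j} = \epsilon \cdot \Bigl[\,\textstyle\prod_k z_k^i w_k^j\,\Bigr]\Bigl(\prod_{k,l=1}^n (z_k - w_l)\Bigr) \cdot (1\otimes e^{n(\alpha+\beta)}).
\]

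To finish, I would expand each factor $(z_k - w_l)$ as either $z_k$ or $-w_l$: this identifies the bracketed coefficient with a signed count of $n \times n$ $0$-$1$ matrices $(\chi_{k,l})$ whose every row sum and every column sum equals $i$ (using $n - j = i$). For $0 \leq i \leq n$ such doubly balanced $0$-$1$ matrices exist (for instance $i$-regular bipartite graphs on $n+n$ vertices), and each contributes the same sign $(-1)^{n^2 - ni}$, so the coefficient is a nonzero integer and $v_{i+j} \neq 0$. The main technical obstacle will be the iterative application of Lemma~\ref{sec:lem1} to put the product of lattice vertex operators into the factored form above with correct bookkeeping of signs and cocycles arising from the $e_\gamma$-operators and the $z^\gamma$-shifts; the combinatorial coefficient extraction and the conformal weight bound are then routine.
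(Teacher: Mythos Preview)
Your proposal is correct and follows essentially the same route as the paper: a conformal-weight bound for the vanishing direction, and for nonvanishing the reduction to $n=i+j$ via Lemma~\ref{sec:lem1}, extraction of the constant term of the $E^-$-factors, and identification of the scalar with a signed count of doubly-regular $0$--$1$ matrices (your row/column sum $i$ versus the paper's $j$ are equivalent by complementation on an $n\times n$ board with $n=i+j$). Your explicit descending-induction step $v_n=0\Rightarrow v_{n+1}=0$ is a welcome addition, since the paper only treats $n=i+j$ and leaves the cases $n<i+j$ implicit.
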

\begin{proof}
The element  $(e^\alpha(-i-1)e^\beta(-j-1))^n |0\rangle$ 
belongs  to
$M(1)\* \C e^{n(\alpha+\beta)}\subset V_L$.
Since the conformal weight of $e^{n(\alpha+\beta)}$ is $n^2$,
the conformal weight of any homogenous vector $v$ of 
$M(1)\* \C e^{n(\alpha+\beta)}$ is equal to or greater than $n^2$,
and it equals to $n^2$ if and only if $v$ coincides with  $e^{n(\alpha+\beta)}$ 
up to constant multiplication.
On the other hand,
$(e^\alpha(-i-1)e^\beta(-j-1))^n |0\rangle$ is homogenous of 
conformal weight  $(i+j)n$,
see \eqref{eq:weight-of-e{alpha}(n)}.
Hence $(e^\alpha(-i-1)e^\beta(-j-1))^n |0\rangle=0$ 
for $n>i+j$,
and
\begin{equation}\label{eqn:cond1}
(e^\alpha(-i-1)e^\beta(-j-1))^{i+j} |0\rangle = c \* e^{(i+j)(\alpha+\beta)},\qquad c\in\C.
\end{equation}
for some $c\in \C$.
It remains to show that  $c\neq 0$.
Since $W$ is commutative, the vector $(e^\alpha(-i-1)e^\beta(-j-1))^n |0\rangle$ coincides with
\begin{align}
\on{Res}
\left(z_1^{-i-1}\cdots z_{i+j}^{-i-1} w_1^{-j-1}\cdots w_{i+j}^{-j-1}
e^\alpha(z_1)\cdots e^\alpha(z_{i+j})e^\beta(w_1)\cdots e^\beta(w_{i+j}) |0\rangle\right).
\label{eq:the-value}
\end{align}
Here 
$\on{Res}f(z_1,\dots,z_{i+j},w_1,\dots, w_{i+j}) $ denotes the coefficient
of $z_1^{-1}\dots z_{i+1}^{-1}w_1^{-1}\dots w_{i+j}^{-1}$ in $f$.
Using  Lemma \ref{sec:lem1} repeatedly,
we have up to some non-zero multiple from 2-cocycle $\varepsilon$
that
\begin{align*}
&e^\alpha(z_1)\cdots e^\alpha(z_{i+j})e^\beta(w_1)\cdots e^\beta(w_{i+j})\\
&=z_1^{i+j}\cdots z_{i+j}^{i+j} \prod_{k,\ell=1}^{i+j}(1-w_k/z_\ell)
E^-(-\alpha,z_1)\cdots E^-(-\alpha,z_{i+j}) 
E^-(-\beta,w_1)\cdots E^-(-\beta,w_{i+j}) \\
&
\cdot 
E^+(-\alpha,z_1)\cdots E^+(-\alpha,z_{i+j})E^+(-\beta,w_1)\cdots E^+(-\beta,w_{i+j})\*  
e_{(i+j)\alpha}(z_1\dots z_{i+r})^{\alpha}e_{(i+j)\beta}(w_1\dots w_{i+j})^{\beta}.
\end{align*}
Hence by \eqref{eqn:vacuum},
we find that 
up to some nonzero multiple from 2-cocycle 
\eqref{eq:the-value} is
equal to
\begin{align}
&\on{Res} \left(z_1^{j-1}\cdots z_{i+j}^{j-1} w_1^{-j-1}\cdots w_{i+j}^{-j-1} \prod_{k,\ell=1}^{i+j}(1-w_k/z_\ell)
\right.\label{eq:the-value2}\\
&\quad \left.E^-(-\alpha,z_1)\cdots E^-(-\alpha,z_{i+j}) 
E^-(-\beta,w_1)\cdots E^-(-\beta,w_{i+j}) \* e^{(i+j)(\alpha+\beta)}\right).
\nonumber
\end{align}
Since 
it must be equal to 
$1\* e^{(i+j)(\alpha+\beta)}$ 
up to constant multiplication,
from weight consideration
we conclude that
\eqref{eq:the-value2} coincides with
\begin{align*}
&\on{Res} \left(z_1^{j-1}\cdots z_{i+j}^{j-1} w_1^{-j-1}\cdots w_{i+j}^{-j-1} \prod_{k,\ell=1}^{i+j}(1-w_k/z_\ell) 
1\* e^{(i+j)(\alpha+\beta)}\right).
\end{align*}
If $j=1$, then this equals $-(i+j)! \* e^{(i+j)(\alpha+\beta)}$. In a similar way, we see that
it is equal to $(-1)^j P_{i+j,j}\*  e^{(i+j)(\alpha+\beta)}$, where $P_{i+j,j}$
is the number of arrangements of 0 and 1 on the $(i+j)\times (i+j)$-square such that any row and any column have
exactly $j$ tuples of 1.
Since $P_{i+j,j}\neq 0$, we have Theorem \ref{sec:rittab}.
\end{proof}

It follows immediately
from Theorem \ref{sec:rittab}
that  the answer to the second question of Ritt is 
given by
$i+j+1$.

\subsection*{Acknowledgements}
This work was started when T.~A.~
was visiting University of Lille from May, 2019 to July, 2019.
He thanks the institute for its hospitality. 
 T.~A.~is partially supported by 
 by the Labex CEMPI (ANR-11-LABX-0007-01)
 and by JSPS KAKENHI Grant Numbers 17H01086, 17K18724.
K.~K.~is partially supported by 
MEXT Japan ``Leading Initiative for Excellent Young Researchers (LEADER)'',
JSPS Kakenhi Grant numbers 19KK0065 and 19J01093.



\bibliography{/Users/tomoyuki/Documents/Dropbox/bib/math}

\bibliographystyle{alpha}

\end{document}